\font\smallit=cmti10
\renewcommand\section{\@startsection {section}{1}{\z@}
  {-30pt \@plus -1ex \@minus -.2ex}
  {2.3ex \@plus .2ex}
  {\normalfont\normalsize\bfseries\boldmath}}
\renewcommand\subsection{\@startsection{subsection}{2}{\z@}
  {-3.25ex\@plus -1ex \@minus -.2ex}
  {1.5ex \@plus .2ex}
  {\normalfont\normalsize\bfseries\boldmath}}
\renewcommand{\@seccntformat}[1]{\csname the#1\endcsname. }
\newcommand\blfootnote[1]{%
  \begingroup
  \renewcommand\thefootnote{}\footnote{#1}%
  \addtocounter{footnote}{-1}%
  \endgroup
}
\newtheorem{theorem}{Theorem}
\newtheorem{lemma}{Lemma}
\theoremstyle{definition}
\theoremstyle{remark}
\newtheorem{remark}{Remark}
\newtheorem{example}{Example}
\numberwithin{equation}{section}
\newcommand{\ceil}[1]{\left\lceil #1 \right\rceil}
\definecolor{codegreen}{rgb}{0,0.6,0}
\definecolor{codegray}{rgb}{0.5,0.5,0.5}
\definecolor{codepurple}{rgb}{0.58,0,0.82}
\definecolor{backcolour}{rgb}{0.95,0.95,0.92}
\lstdefinestyle{mystyle}{
  backgroundcolor=\color{backcolour},
  commentstyle=\color{codegreen},
  keywordstyle=\color{magenta},
  numberstyle=\tiny\color{codegray},
  stringstyle=\color{codepurple},
  basicstyle=\ttfamily\footnotesize,
  breaklines=true,
  captionpos=b,
  keepspaces=true,
  numbers=left,
  numbersep=5pt,
  showspaces=false,
  showstringspaces=false,
  showtabs=false,
  tabsize=4
}
\begin{document}

\begin{center}
\uppercase{\bf \boldmath An Algorithm for the Product-Sum Equality}
\vskip 20pt
{\bf Hlib Husarov}\\
{\smallit Mary Immaculate Secondary School, Lisdoonvarna, Ireland}\\
\vskip 10pt
{\bf Eberhard Mayerhofer}\\
{\smallit Mathematics and Statistics Department, University of Limerick, Ireland}\\
{\tt eberhard.mayerhofer@ul.ie}\\
\end{center}

\vskip 20pt
\centerline{\smallit Received: 13 August 2025}
\vskip 30pt

\centerline{\bf Abstract}
\noindent
We propose a recursive algorithm for identifying all finite sequences of positive integers whose product equals their sum. Our method uses solutions of strictly shorter length that are iteratively extended in pursuit of a valid solution. The algorithm is efficient, with a time complexity similar to the quick sort algorithm.

\pagestyle{myheadings}
\thispagestyle{empty}
\baselineskip=12.875pt
\vskip 30pt

\blfootnote{\textbf{MSC 2020.} 11A, 11D, 11Y. \textbf{Keywords.} Diophantine equations, algorithms, time-complexity.}

\section{Introduction}\label{sec: intro}
This paper puts forward a recursive algorithm to identify all positive integer sequences $a=(a_1,a_2,\dots,a_n)$ that satisfy the Diophantine equation
\begin{equation}\label{eq: sumprod}
\prod_{i=1}^n a_i = \sum_{i=1}^n a_i
\end{equation}
and gives a precise asymptotic result concerning its time-complexity.

While this problem has been studied with the help of computer programs---especially in the search for a number~$n$ with at most one solution---the underlying algorithms are rarely disclosed, and concrete implementations appear to be unavailable in the public domain.\footnote{Only while finalizing this manuscript did we come across \cite{nyblom2013}, which presents another recursive method that constructs solutions to~\eqref{eq: sumprod} for any integer~$n$ by generating solutions for all $n' < n$, combined with certain divisibility checks. Our method is conceptually different: it generates those solutions for $n'<n$ as a byproduct without the need to store excessive amounts of data, and divisibility checks are unnecessary.}

The main results known to date regarding this problem are as follows.
\begin{enumerate}
\item For each $n$, the problem has at least one solution, called the basic solution, given by
\begin{equation}\label{eq: basic}
a_1 = n, \quad a_2 = 2, \quad a_3 = \dots = a_n = 1.
\end{equation}
An integer $n$ for which the basic solution is the only one is called \emph{exceptional}.
\item The number of solutions grows on average faster than any logarithmic power: Let $f(n)$ denote the number of all solutions of length $n$ satisfying \eqref{eq: sumprod}. Then the average satisfies (in a simplified form; the original source \cite{weingartner} reports more refined asymptotics):
\[
\frac{1}{n} \sum_{n \leq x} f(n) = O\left(\frac{e^{2\sqrt{\log(x)}}}{\log^{3/4}(x)}\right).
\]

\item \cite{ecker,nyblom,reble} If $n$ is exceptional, then $n-1$ is a Sophie Germain prime, meaning both $n-1$ and $2n-1$ are prime.\footnote{Nyblom provides a detailed proof of this fact, while Reble offers an explicit construction: if $2n-1 = ab$ is composite with $a\geq b \geq 2$, then a solution of length $n$ is given by $a_1 = (a+1)/2$, $a_2 = (b+1)/2$, $a_3=2$ and $a_4 = \dots = a_n = 1$.}

\item \cite{ecker,reble} If $n$ is exceptional, then $n \equiv 0 \pmod{30}$ or $n \equiv 24 \pmod{30}$.\footnote{While \cite{ecker} does not exclude the possibility $n \equiv 12 \pmod{30}$, \cite{reble} constructs explicit solutions for these cases; for $n = 30j + 12$, the solution is $a_1 = \dots = a_4 = 2$, $a_5 = 2j + 1$, and $a_6 = \dots = a_n = 1$.}

\item \cite{weingartner} The only known exceptional numbers less than $10^{11}$ are
\[
2, 3, 4, 6, 24, 114, 174, 444.
\]
(this is sequence A033179 in the Online Encyclopedia of Integer Sequences, cf.~\url{https://oeis.org/A033179}.)

\item \cite{ecker} If $a = (a_i)_{i=1}^n$ satisfies \eqref{eq: sumprod}, then the following hold:
\begin{enumerate}
\item \label{bta} $a_i \leq n$ for all $1 \leq i \leq n$.
\item \label{btb} $\prod_{i=1}^n a_i \leq 2n$.
\end{enumerate}
Moreover, if $a$ is not the basic solution, then the inequalities in \eqref{bta} and \eqref{btb} are strict.
\end{enumerate}

\subsection{Program of the paper}
We begin by presenting the algorithm in Section~\ref{sec: alg} and perform it for small $n$ in Example \ref{ex: 1}. We prove in Theorem~\ref{th: alg} that it finds all possible solutions. The subsequent statements demonstrate the algorithm's efficiency (Theorem \ref{th: eff working}) and time-complexity (Theorem ~\ref{thm: complexity}).

The proof of Theorem~\ref{th: alg} relies on key results from Section~\ref{sec: inequalities}, in particular a fundamental sharp inequality~\eqref{inequality} for solutions of~\eqref{eq: sumprod}, which bounds the product of the first $k$ elements of a solution $a = (a_1, \dots, a_n)$ in terms of corresponding sums (Theorem~\ref{prop: 1}). Since products grow faster than sums, solutions to~\eqref{eq: sumprod} typically contain many ones; to formalize this intuition, we provide a precise numerical bound on the maximum number of summands different from one (Lemma \ref{lem: log length}). Theorem~\ref{prop: 1} is also used to give an alternative proof of the Boundedness Propositions from~\cite{ecker}, which allows us to search for each unknown $a_i$ in the limited range $\{2,3,\dots,n\}$.


\section{The Algorithm}\label{sec: alg}

Throughout this paper, we assume without loss of generality that the integer sequences $(a_i)_i$ are non-increasing, i.e., $a_1 \geq a_2 \geq \dots \geq a_n$, whether or not they satisfy \eqref{eq: sumprod}. 

Let $n\geq 2$ be a positive integer. The following recursive algorithm searches for solutions to \eqref{eq: sumprod}. It maintains a candidate solution $a = (a_1, \dots, a_i)$, where $i \geq 1$ is the current index. Define the partial sum and partial product of the candidate solution as
\[
s = \sum_{j=1}^i a_j, \quad p = \prod_{j=1}^i a_j,
\]

Due to Theorem \ref{prop: 1}, if $a$ can be extended to a solution of \eqref{eq: sumprod} of length $n$, than it must satisfy the feasibility condition

\begin{equation}\label{eq: feasibility}
p \leq s + n - i, \quad i\leq n;
\end{equation}
Equality holds, whenever the solution is found (in which case $a_{i+1}=\dots a_{n}=1$).

The recursive function takes the triple $(p, s, i)$ that satisfies \eqref{eq: feasibility} and extends the candidate solution $a$ by adding a new element $a_{i+1}$ (we typically start at $p=1$ (empty product), $s=0$ (empty sum) and $i=0$, which also satisfies \eqref{eq: feasibility}. )

The algorithm proceeds as follows:

\begin{enumerate}
  \item If $i \geq 1$, set the maximum allowed value for $a_{i+1}$ as $m := a_i$; otherwise, set $m := n$.

  \item For each candidate extension $a' = 2, 3, \dots, m$:
  \begin{enumerate}
    \item Extend the partial solution by setting $a_{i+1} := a'$.

    \item Update the sum and product:
    \[
    s' := s + a', \quad p' := p \times a'.
    \]

    \item Check the feasibility condition for the extended partial solution:
    \begin{itemize}
      \item If \(p' > s' + n - (i+1)\), prune this branch and backtrack.

      \item If \(p' = s' + n - (i+1)\), record the solution
      \[
      (a_1, \dots, a_i, a_{i+1}, \underbrace{1, \dots, 1}_{n - i - 1 \text{ times}}),
      \]
      then prune this branch and backtrack.

      \item Otherwise, recursively call the function with updated parameters $(p', s', i+1)$ to continue extending the solution.
    \end{itemize}

  \end{enumerate}

  \item After all candidate values $a'$ have been tested, return from the current recursive call.
\end{enumerate}
We explain the algorithm by using a particularly small number in the following:
\begin{example}\label{ex: 1}
For $n=5$, there are three solutions (In the following, we do not quote any ones in a solution. E.g., the basic solution is written as $(5,2)$.). We start
the routine with $p=1,s=0$ and $i=0$.
\begin{enumerate}
  \item Try $a=(2)$. Since $2 < 2 + (5 - 1) = 6$, extend the vector:
  \begin{enumerate}
    \item Try $a=(2,2)$. Since $4 < 4 + (5 - 2) = 7$, extend the vector:
    \begin{enumerate}
      \item Try ${\bf a=(2,2,2)}$. Since $8 = 6 + (5 - 3) = 8$, a solution is found. Return to the last index $i$ where $a_i$ can be increased.
    \end{enumerate}
  \end{enumerate}

  \item Try $a=(3)$. Since $3 < 3 + (5 - 1) = 7$, extend the vector:
  \begin{enumerate}
    \item Try $a=(3,2)$. Since $6 < 5 + (5 - 2) = 8$, extend the vector:
    \begin{enumerate}
      \item Try $a=(3,2,2)$. Since $12 > 7 + (5 - 3) = 9$, the inequality is violated. Return to the last valid index.
    \end{enumerate}
    \item Try ${\bf a=(3,3)}$. Since $9 = 6 + (5 - 2) = 9$, a solution is found. Return to the last index $i$ where $a_i$ can be increased.
  \end{enumerate}

  \item Try $a=(4)$. Since $4 < 4 + (5 - 1) = 8$, extend the vector:
  \begin{enumerate}
    \item Try $a=(4,2)$. Since $8 < 6 + (5 - 2) = 9$, extend the vector:
    \begin{enumerate}
      \item Try $a=(4,2,2)$. Since $16 > 8 + (5 - 3) = 10$, the inequality is violated. Return to the last valid index.
    \end{enumerate}
    \item Try $a=(4,3)$. Since $12 > 7 + (5 - 2) = 10$, the inequality is violated. Backtrack to the last valid index and continue.
  \end{enumerate}

  \item Try $a=(5)$:
  \begin{enumerate}
    \item $(5,2)$ is the basic solution, end.
  \end{enumerate}
\end{enumerate}
\end{example}

In the following three subsections, we develop statements that reveal the efficiency of the algorithm. 

\subsection{Completeness}

The algorithm does not consider every integer sequence of length \( n \); instead, it restricts sequences by imposing \( a_i \leq n \). Moreover, if at any point the product \( p \) satisfies \( p \geq s + n - i \), the algorithm stops extending the sequence \( (a_1, \dots, a_i) \) by adding numbers greater than one. These restrictions do not exclude any solutions:

\begin{theorem}\label{th: alg}
Let $n\geq 3$. Starting from the initial values \((s=0, p=1, i=0)\), the algorithm finds all solutions to equation \eqref{eq: sumprod}. The last solution identified by the algorithm is the basic solution \eqref{eq: basic}.
\end{theorem}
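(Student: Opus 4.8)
The plan is to show three things: (i) every sequence the algorithm outputs is a genuine solution of \eqref{eq: sumprod}; (ii) no solution is missed despite the two restrictions $a_i \le n$ and the feasibility pruning; and (iii) the basic solution is the last one reported. For (i), observe that the algorithm records a candidate exactly when it reaches a state $(p',s',i+1)$ with $p' = s' + n - (i+1)$; padding with $n-i-1$ ones then yields a sequence whose product is $p'$ and whose sum is $s' + (n-i-1) = p'$, so \eqref{eq: sumprod} holds, and the non-increasing convention is preserved because each new $a' \le a_i$ and $a' \ge 2 \ge 1$. This direction is essentially bookkeeping.

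For (ii), I would take an arbitrary solution $a = (a_1 \ge a_2 \ge \dots \ge a_n)$ and let $k$ be the largest index with $a_k \ge 2$ (so $a_{k+1} = \dots = a_n = 1$; if $a$ is all ones it is not a solution for $n \ge 3$, so $k \ge 1$ exists, and in fact $k \ge 2$ for $n \ge 3$). The claim is that the algorithm's depth-first search visits the node $(a_1,\dots,a_k)$ and records $a$ there. Two things must be checked. First, the bound $a_1 \le n$ and, for $j \ge 2$, $a_j \le a_{j-1}$ are exactly the loop bounds $m$ in step 1–2, so each prefix $(a_1,\dots,a_j)$ lies in the search tree \emph{provided} it is never pruned. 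Second, no proper prefix is pruned: for each $j \le k$, the prefix $(a_1,\dots,a_j)$ has partial product $P_j := \prod_{i\le j} a_i$ and partial sum $S_j := \sum_{i \le j} a_i$, and I must verify $P_j \le S_j + n - j$, i.e. the feasibility condition \eqref{eq: feasibility}. This is precisely where Theorem~\ref{prop: 1} enters: its sharp inequality~\eqref{inequality} bounds $\prod_{i=1}^j a_i$ in terms of $\sum_{i=1}^j a_i$ for any solution, and I expect it to give exactly $P_j \le S_j + n - j$ with equality iff $j \ge k$. Granting that, the prefix $(a_1,\dots,a_{k-1})$ survives (strict inequality, since $k-1 < k$), the loop at that node eventually tries $a' = a_k$, producing $(p',s',k) = (P_k, S_k, k)$ with $P_k = S_k + n - k$ by the equality case, so the algorithm records exactly the padded sequence $(a_1,\dots,a_k,1,\dots,1) = a$. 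The only subtlety is the boundary case $k=1$ (a one-element non-trivial part, i.e. $a_1 = n$, $a_2=2$ — wait, that has $k=2$): one should check $a_1 = n$ is reached at $i=0$ since $m=n$ there, handled by the last branch of the search.

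For (iii), I would argue that the algorithm's outer loop at $i=0$ runs $a' = 2, 3, \dots, n$ in increasing order, and within each fixed $a_1$ the recursion explores $a_2$ increasingly, etc.; hence solutions are enumerated in lexicographic order of $(a_1,a_2,\dots)$. The basic solution $(n,2,1,\dots,1)$ has the largest possible $a_1$, namely $n$ (by item~6(a), $a_1 \le n$ with equality only for the basic solution), so it is visited strictly after every other solution — and moreover when $a_1 = n$ the feasibility condition at $i=1$ is $n \le n + n - 1$, and extending by $a' = 2$ gives $2n = n + 2 + n - 2$, an equality, so $(n,2,1,\dots,1)$ is recorded and then $a' = 3$ already violates feasibility, so this is genuinely the final output.

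The main obstacle is the precise form of the feasibility inequality for \emph{all} prefixes of a solution, including the equality characterization at index $k$: I need Theorem~\ref{prop: 1} to say not merely that products are dominated by sums in some asymptotic sense, but exactly that $\prod_{i=1}^j a_i \le \sum_{i=1}^j a_i + (n-j)$ for every $j$, with equality precisely when the remaining entries $a_{j+1},\dots,a_n$ are all $1$. If \eqref{inequality} is stated in a slightly different normalization, the bulk of the work will be massaging it into this shape; everything else — the lexicographic ordering, the loop-bound bookkeeping, the no-spurious-output direction — is routine induction on the recursion depth.
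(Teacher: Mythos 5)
Your proof is correct, and it reaches the conclusion by a complementary route to the paper's. The paper argues in the contrapositive direction: it shows that every candidate the algorithm \emph{omits} cannot be a solution, which requires not only Theorem~\ref{prop: 1} and the bound $a_i\le n$ (Lemma~\ref{lem: 1}), but also Lemmas~\ref{lem: 2} and~\ref{lem: 2x} to certify that once a prefix satisfies $p\ge s+n-i$, no further extension by terms greater than one can recover a solution. You instead argue forward: you fix an arbitrary solution, let $k$ be the length of its non-trivial part, and trace its prefix chain through the search tree, using the loop bounds ($a_1\le n$ from Lemma~\ref{lem: 1}, $a_{j}\le a_{j-1}$ from the non-increasing convention) together with the \emph{sharp} form of Theorem~\ref{prop: 1} --- strict inequality for $j<k$, equality for $j=k$ --- to see that no proper prefix is pruned and that the node at depth $k$ records exactly the padded solution. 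This buys you a leaner dependency set (Lemmas~\ref{lem: 2} and~\ref{lem: 2x} are not needed), at the cost of leaning on the equality/strictness dichotomy in Theorem~\ref{prop: 1}, which the paper does state and prove in exactly the form you require, so no massaging is necessary. Your treatment of the ``last solution'' claim (lexicographic enumeration plus the fact that $a_1=n$ only for the basic solution, and that $a'=3$ then fails feasibility) matches the paper's closing computation. The only cosmetic remark: your aside about the boundary case $k=1$ is moot, since, as you note yourself, any solution with $n\ge 3$ has $k\ge 2$.
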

\begin{proof}
To prove the first part, it suffices to show that any candidate solutions omitted by the algorithm do not satisfy the equation.

First, Lemma \ref{lem: 1} guarantees that any valid solution must satisfy \( a_i \leq n \), which justifies restricting the maximum value of \( a_i \) in the loop to \( m \).

Second, by Theorem \ref{prop: 1}, if \( p > s + n - i \), no solution \( a \) to \eqref{eq: sumprod} exists with initial terms \( a_1, \dots, a_i \). If \( p = s + n - i \), then \( (a_1, a_2, \dots, a_i, 1, \dots, 1) \) is a solution. In both cases, extending the sequence further with terms \( a_{i+1} > 1 \) is unnecessary due to Lemma \ref{lem: 2} and Lemma \ref{lem: 2x}, explaining why the corresponding branches terminate in the Algorithm.

For the base case \( i=0 \), we have \( m = n \), so the for-loop runs from \( a = 2 \) to \( a = n \). In particular, when \( a_1 = n \), we get \( s = p = n \), which satisfies \( p < s + (n - 1) \). The recursive call proceeds, immediately assigning \( a_2 = 2 \) and thereby finding the basic solution, then returning to index $i=1$, and since \( a_1 = n=m \), the loop concludes here.
\end{proof}

\subsection{Relation to Nyblom's Algorithm}

Note that \cite{nyblom2013} recursively generates all sets $S_r(n)$ for $2 \leq r \leq m+1$, where $m \leq \log_2(n) + 1$, and $S_r(n)$ denotes the set of all solutions to \eqref{eq: sumprod} with exactly $r$ coefficients satisfying $a_1 \geq a_2 \geq \dots \geq a_r > 1$.
Even in the case $r = 2$, the construction of $S_r(n)$ in \cite{nyblom2013} involves determining the prime factorization of $n - 1$, for which no efficient (that is, of polynomial-time) algorithm is known. Although one could sidestep this issue in the special case $r = 2$ by selecting a larger integer $N > n$ such that $N - 1$ is prime, this workaround becomes impractical for $r \neq 2$ due to the significantly more complex divisibility conditions involved.

In contrast to \cite{nyblom2013}, our algorithm performs no explicit divisibility checks. However, like the method in \cite{nyblom2013}, it generates all solutions for $n' < n$, but does so in a significantly more efficient manner. This claim is proved next (In the following, we adopt a simplified notation to describe the recursive step of the algorithm):
\begin{theorem}\label{th: eff working}
Let $n\geq 3$. The algorithm efficiently generates all solutions of
\begin{equation}\label{eq: sumprod1}
\prod_{j=1}^{n'} a_j = \sum_{j=1}^{n'} a_j
\end{equation}
for any $n'<n$:
\begin{enumerate}
\item Whenever $p'<s'+(n-i')$, then $\widetilde a=(a_1,a_i,a',1,\dots,1)\in\mathbb N^{n'}$ with $n':=p'-s'+i'$ solves 
\eqref{eq: sumprod1}, and $i\leq n'<n$.
\item \label{part: b} The algorithm produces all positive solutions of \eqref{eq: sumprod1} for any $n'< n$.
\item \label{part: c} The algorithm is efficient in that no solution is produced twice.
\end{enumerate}
\end{theorem}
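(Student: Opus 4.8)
The plan is to establish the three assertions in turn, organised around the observation that every recursive call of the algorithm that enters its ``recurse'' branch records, implicitly, one solution of shorter length. For the first assertion, fix such a call: the candidate $(a_1,\dots,a_i)$ is extended by some $a'$, and with $i'=i+1$, $p'=p\,a'$, $s'=s+a'$ we are in the case $p'<s'+(n-i')$. Appending ones leaves the product unchanged and raises the sum by one per appended coordinate, so the vector $\widetilde a=(a_1,\dots,a_i,a',1,\dots,1)$ with $n':=p'-s'+i'$ coordinates has product $p'$ and sum $s'+(n'-i')=p'$; hence $\widetilde a$ solves \eqref{eq: sumprod1}. The bound $n'<n$ is immediate from the case hypothesis. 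For $\widetilde a$ to be a genuine element of $\mathbb N^{n'}$ one needs $n'\ge i'$, i.e. $p'\ge s'$; this is the elementary inequality (a one-line induction) $\prod_j b_j\ge\sum_j b_j$ valid for any nonempty finite list of integers $b_j\ge 2$, applied to $(a_1,\dots,a_i,a')$. Since $i<i'\le n'$ this also gives the stated $i\le n'$.

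For the second assertion, let $b=(b_1,\dots,b_{n'})$ be a non-increasing positive solution of \eqref{eq: sumprod1} with $n'<n$, and let $k$ be the number of entries exceeding $1$, so $b=(b_1,\dots,b_k,1,\dots,1)$ with $b_1\ge\cdots\ge b_k\ge 2$ and $\prod_{j=1}^k b_j=\sum_{j=1}^k b_j+(n'-k)$. I will show the algorithm, run with parameter $n$, reaches the call that extends $(b_1,\dots,b_{k-1})$ by $b_k$. Two checks are needed. First, that no prefix of $b$ is pruned or prematurely recorded: Theorem~\ref{prop: 1} applied to the length-$n'$ solution $b$ gives $\prod_{j=1}^{m}b_j\le\sum_{j=1}^{m}b_j+(n'-m)$ for each $m$, and because $n'<n$ this is a \emph{strict} instance of the feasibility test \eqref{eq: feasibility} at the global parameter $n$ for $m=1,\dots,k$; so along this path the algorithm always takes the ``recurse'' branch. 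Second, that the for-loop actually offers $b_m$ at the $m$-th step: for $m\ge 2$ this is the monotonicity $b_m\le b_{m-1}$, which is the loop's upper limit there, and for $m=1$ it is the boundedness Lemma~\ref{lem: 1}, which gives $b_1\le n'<n$. At the call so reached, $p'=\prod_{j=1}^k b_j$, $s'=\sum_{j=1}^k b_j$, $i'=k$, hence $p'-s'+i'=n'$ and $p'<s'+(n-k)$; the first assertion then produces exactly $\widetilde a=(b_1,\dots,b_k,1,\dots,1)=b$. (Small $n'$ are immediate.)

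For the third assertion I will show that the map from ``output events'' of the algorithm to the solutions they produce is injective. Each output event --- a ``record'' step at length $n$, or a ``recurse'' step carrying a shorter solution as in the first assertion --- is attached to the tuple $(a_1,\dots,a_{i'})$ it extends to, and the call tree visits each non-increasing tuple of integers $\ge 2$ at most once, since the for-loop bound $a'\le a_i$ keeps every branch sorted. Conversely, the produced solution recovers that tuple: it is non-increasing, so its trailing ones are stripped unambiguously to leave $(a_1,\dots,a_{i'})$, and its length recovers $n'$ (equivalently $p'$, $s'$). Hence distinct events give distinct solutions, and nothing is produced twice.

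I expect the crux to be the second assertion, specifically the step that no prefix of a given short solution is pruned: the feasibility test is run with the \emph{larger} parameter $n$ while the sharp prefix inequality \eqref{inequality} of Theorem~\ref{prop: 1} is available only at the true length $n'$, and it is precisely the slack $n-n'\ge 1$ that makes that inequality strict and keeps the relevant branch alive; one must also confirm $b_1$ stays in the search range $\{2,\dots,n\}$ via boundedness. The first and third assertions reduce, respectively, to a short arithmetic identity and to a bookkeeping argument on the call tree.
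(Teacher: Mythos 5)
Your proof is correct and, for parts 1 and 3, follows essentially the same route as the paper: your first assertion is exactly the content of the paper's Lemma~\ref{lem: nyblom} (appending $n'-i'$ ones with $n'=p'-s'+i'$, plus the observation that a product of integers $\geq 2$ dominates their sum so that $n'\geq i'$), and your third is the paper's injectivity argument (the call tree visits each non-increasing tuple of entries $\geq 2$ at most once, and a produced solution determines both its nontrivial prefix and its length $n'$). Where you go beyond the paper is part~\ref{part: b}: the published proof never argues it explicitly, leaving it to follow implicitly from the converse direction of Lemma~\ref{lem: nyblom} (a solution of length $n'<n$ satisfies the strict inequality \eqref{eq: ineq fund} at level $n$) combined with the completeness argument of Theorem~\ref{th: alg}. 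You supply precisely the missing chain: Theorem~\ref{prop: 1} applied at the true length $n'$ gives $\prod_{j\leq m}b_j\leq\sum_{j\leq m}b_j+(n'-m)$ for every prefix, the slack $n-n'\geq 1$ turns this into strict feasibility at the global parameter $n$ so no prefix is pruned or prematurely recorded, and Lemma~\ref{lem: 1} keeps $b_1\leq n'<n$ inside the loop range. This makes your write-up more self-contained than the paper's on that point; the substance and the supporting lemmas are the same.
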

\begin{proof}
The first statement is just a reformulation of Lemma \ref{lem: nyblom} below. Proof of Part \ref{part: c}: Since the algorithm does not produce a finite sequence twice, and since the same sequence for which \eqref{eq: ineq fund} holds cannot satisfy
equality in \eqref{eq: sumprod1} for two different $n'$, we have a one-to-one map between those sequences that satisfy \eqref{eq: ineq fund} for $n$ and the solutions of \eqref{eq: sumprod1} for $n'<n$.
\end{proof}

\begin{lemma}\label{lem: nyblom}
Suppose $a=(a_1,a_2,\dots,a_i)$ with $i\leq n$ and $a_i>1$ solves
\begin{equation}\label{eq: ineq fund}
\prod_{j=1}^i a_j<\sum_{j=1}^i a_j+(n-i),
\end{equation}
then $n':=\prod_{j=1}^i a_j-\sum_{j=1}^i a_j+i$ satisfies $i\leq n'<n$ and  $a=(a_1,a_2,\dots,a_i,1,\dots,1)\in\mathbb N^{n'}$ solves \eqref{eq: sumprod1}.)

Conversely, if for some $n'<n$, $a=(a_1,a_2,\dots,a_i,1,\dots,1)\in\mathbb N^{n'}$ solves \eqref{eq: sumprod1}, then $a$ satisfies \eqref{eq: ineq fund}.
\end{lemma}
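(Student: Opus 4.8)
The plan is to introduce the partial product $P := \prod_{j=1}^i a_j$ and partial sum $S := \sum_{j=1}^i a_j$, so that by definition $n' = P - S + i$ and the hypothesis \eqref{eq: ineq fund} becomes simply $P < S + n - i$. Both implications then reduce to elementary manipulations together with one monotonicity observation.

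For the forward direction I would first note that $n' < n$ is a direct rearrangement: $n' = P - S + i < (S + n - i) - S + i = n$. The lower bound $n' \ge i$ is equivalent to $P \ge S$, which I would prove by induction on $i$, using the standing hypotheses that $a$ is non-increasing and $a_i > 1$ (so $a_j \ge 2$ for all $j$). The base case $i = 1$ gives $P = a_1 = S$; for the step, $P_{k+1} = P_k a_{k+1} \ge S_k a_{k+1} \ge 2 S_k \ge S_k + a_{k+1} = S_{k+1}$, where the last inequality uses $S_k \ge a_1 \ge a_{k+1}$ and is the only place monotonicity enters. Once $i \le n' < n$ is established, padding $a$ with $n' - i \ge 0$ ones produces $\widetilde a = (a_1, \dots, a_i, 1, \dots, 1) \in \mathbb N^{n'}$, which is still non-increasing, whose product is unchanged (equal to $P$) and whose sum equals $S + (n' - i) = S + (P - S) = P$; hence $\widetilde a$ solves \eqref{eq: sumprod1}.

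The converse is a one-line computation: if $a = (a_1, \dots, a_i, 1, \dots, 1) \in \mathbb N^{n'}$ with $n' < n$ solves \eqref{eq: sumprod1}, then comparing product and sum gives $P = S + (n' - i)$, and since $n' < n$ we get $P = S + n' - i < S + n - i$, which is \eqref{eq: ineq fund}. The only step carrying any real content is the inequality $P \ge S$ in the forward direction, which ensures the number of appended ones is non-negative; it is mild, and could alternatively be read off from Theorem~\ref{prop: 1}, but the short induction above is self-contained.
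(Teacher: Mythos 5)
Your proposal is correct and follows essentially the same route as the paper: $n'<n$ from the strict inequality, $n'\geq i$ from $P\geq S$, padding with ones by definition of $n'$, and the converse as a one-line rearrangement. The only difference is that you prove $P\geq S$ by a short explicit induction, whereas the paper simply invokes the fact that a product of integers greater than one is at least their sum; your version is slightly more self-contained but not a different argument.
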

\begin{proof}
Note that by definition of $n'$, $a=(a_1,a_2,\dots,a_i,1,\dots,1)$ satisfies \eqref{eq: sumprod1}, and $n'<n$, due to strict inequality in \eqref{eq: ineq fund}.  Furthermore, since $i=n'-(\prod_{j=1}^i a_j-\sum_{j=1}^i a_j)$, and since products of integers strictly greater than one exceed their sum, we have $i\leq n'$.

The second part is trivial, \eqref{eq: sumprod1} implies that $\prod_{j=1}^i a_j=\sum_{j=1}^i a_j+(n'-i)<\sum_{j=1}^i a_j+(n-i)$,
as $n'<n$.
\end{proof}

\subsection{Efficiency}
Finally, we establish the time-complexity of our algorithm. To have a simple measure of time-complexity, we count how many products
$(p,s,i)$ are checked in the algorithm, whether they satisfy $p<s+n-i$. We call this number, {\it the number of steps $\Theta(n)$.} 

Before we state the precise result, we give a rough estimate. By \cite[Boundedness proposition 1]{ecker} (see also \cite[Lemma 6.3]{weingartner}), any combination \( (a_1, \dots, a_k) \) satisfying the inequality also fulfills the condition \( \prod_{i=1}^n a_i \leq 2n \). Therefore, the runs of our algorithm satisfy the same asymptotic upper bound, as the number of factorizations \( h(l) \) of an integer \( l \geq 1 \), where order is irrelevant and each factor is strictly greater than 1, satisfies the asymptotic estimate:
\begin{equation}\label{eq: opp}
\sum_{l \leq 2n} h(l) = O\left( 2n \cdot \frac{e^{2\sqrt{\log(2n)}}}{(\log(2n))^{3/4}} \right)
\end{equation}
(cf.~ \cite[Eq (4.51)]{oppenheim1927}.)

However, we have a more precise result, whose proof does not require an application of  Ecker's Boundedness proposition and also gives asymptotics for a lower bound:\footnote{In the theorem, we use $x(n)=O(a(n))$ in the following way: There exists constants $0<b<B$ such that $b\leq \lim_{n\rightarrow \infty} x(n)/a(n)\leq B$.} It rather counts the number of {\it all} solutions of \eqref{eq: sumprod1} for any $n'\leq n$:
\begin{theorem}\label{thm: complexity}
The number of steps \( \Theta(n) \) required to solve \eqref{eq: sumprod} is proportional to the number of solutions of \eqref{eq: sumprod1}, for any $n'\leq n$, and thus satisfies the asymptotics (from above and below)
\begin{equation}\label{eq: asy time}
\Theta(n) = O\left(\frac{n e^{2\sqrt{\log(n)}}}{\log^{3/4}(n)}\right).
\end{equation}
\end{theorem}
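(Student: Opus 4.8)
The plan is to leverage Theorem~\ref{th: eff working} to turn a statement about algorithmic steps into a statement about counting solutions, and then invoke the classical asymptotics for factorizations. First I would establish the lower and upper bounds on $\Theta(n)$ in terms of the count $\sum_{n'\leq n} f(n')$. For the upper bound: every step of the algorithm evaluates a triple $(p,s,i)$ with $a=(a_1,\dots,a_i)$ a non-increasing sequence of integers $>1$ satisfying the (non-strict) feasibility condition $p\leq s+n-i$. By Theorem~\ref{prop: 1} (or directly by the pruning logic), each such triple either satisfies strict inequality---in which case Lemma~\ref{lem: nyblom} associates to it a \emph{unique} solution of \eqref{eq: sumprod1} for some $n'<n$ with $n'=p-s+i$---or it satisfies equality, in which case it \emph{is} (the non-trivial part of) a solution for $n'=n$. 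Since distinct triples give distinct sequences (the algorithm never repeats a sequence, and the map (sequence with $>1$ entries)$\,\mapsto$(padded solution) is injective for fixed $n'$, and $n'$ is itself determined by the triple), this gives $\Theta(n)\leq C\sum_{n'\leq n} f(n')$ for an absolute constant $C$ (the constant absorbing, e.g., a bounded number of steps per sequence). For the lower bound: conversely, by Part~\ref{part: b} of Theorem~\ref{th: eff working} every solution of \eqref{eq: sumprod1} for $n'<n$ arises from exactly one examined triple with strict inequality, and every solution for $n'=n$ arises from one with equality; hence $\Theta(n)\geq c\sum_{n'\leq n} f(n')$. Thus $\Theta(n)$ is proportional to $\sum_{n'\leq n} f(n')$, which is the first assertion.

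Next I would relate $\sum_{n'\leq n} f(n')$ to the factorization-counting function $h$. By item~6(b) of the introduction (Ecker's boundedness, $\prod a_i\leq 2n$ for a solution of length $n$), any solution of \eqref{eq: sumprod1} of length $n'\leq n$ has product $l=\prod a_j\leq 2n'\leq 2n$, and its non-trivial part $(a_1,\dots,a_i)$ is an unordered factorization of $l$ into factors $>1$; moreover $n'=l-\sum a_j+i$ is determined by that factorization. So each solution (across all $n'\leq n$) corresponds injectively to a pair $(l,\text{factorization of }l)$ with $l\leq 2n$, giving $\sum_{n'\leq n} f(n') \leq \sum_{l\leq 2n} h(l)$. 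In the other direction, essentially every unordered factorization $l=b_1\cdots b_i$ with $b_j>1$ and $l\leq n$ yields a genuine solution of \eqref{eq: sumprod1} for $n'=l-\sum b_j+i\leq l\leq n$ by padding with ones (the only thing to check is $i\leq n'$, which holds since $\prod b_j\geq \sum b_j$ for factors $>1$, established already in Lemma~\ref{lem: nyblom}); this gives $\sum_{n'\leq n} f(n')\geq \sum_{l\leq n} h(l)$. Since $\sum_{l\leq n} h(l)$ and $\sum_{l\leq 2n} h(l)$ have the same order of magnitude (by the monotonicity of partial sums and the fact that the Oppenheim asymptotic $\sum_{l\leq x}h(l)=O\!\big(x e^{2\sqrt{\log x}}/(\log x)^{3/4}\big)$ is stable under $x\mapsto 2x$, the ratio of the two bounds tending to a constant), we conclude $\sum_{n'\leq n}f(n')=O\!\big(n e^{2\sqrt{\log n}}/(\log n)^{3/4}\big)$ in the two-sided sense of the footnote.

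Combining the two proportionalities yields \eqref{eq: asy time}. The main obstacle I anticipate is the bookkeeping in the lower bound: one must be careful that the ``padding with ones'' map from unordered factorizations to solutions is genuinely a bijection onto its image and that no solutions of small length are double-counted, and one must confirm that Oppenheim's estimate \eqref{eq: opp} is in fact two-sided (i.e., that the cited source gives a matching lower bound, or that one supplies the trivial lower bound $\sum_{l\leq x}h(l)\geq \sum_{p\leq x}1 \gg x/\log x$ is \emph{not} enough---one needs the sharper lower bound, which does follow from Oppenheim's asymptotic formula rather than merely his $O$-bound). A secondary subtlety is that the theorem's $O(\cdot)$ is defined via a limit of the ratio, so I should make sure the relevant limits exist, or else weaken the claim to $\limsup$/$\liminf$ being finite and positive; given that Oppenheim establishes an asymptotic equivalence (not just an upper bound) for $\sum_{l\leq x} h(l)$, the limit does exist and the stated two-sided bound is justified.
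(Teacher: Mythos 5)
Your proposal is correct, and its first half---converting the step count $\Theta(n)$ into a count of solutions of \eqref{eq: sumprod1} over all $n'\le n$ via Lemma~\ref{lem: nyblom} and the no-repetition property of part~\ref{part: c} of Theorem~\ref{th: eff working}, with a multiplicative constant absorbing the pruned branches---is essentially the paper's own argument. (One small inaccuracy: by the paper's definition, $\Theta(n)$ also counts the checks that \emph{fail} the feasibility condition, so your opening claim that every step evaluates a feasible triple is not literally true; your parenthetical about a bounded number of steps per sequence is what repairs this, just as the paper's ``at most twice'' remark does.) Where you genuinely diverge is the second half. The paper gets the two-sided asymptotics for $\sum_{n'\le n} f(n')$ by citing Weingartner's average-order result directly, and explicitly advertises that its proof avoids Ecker's boundedness proposition. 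You instead re-derive that count from Oppenheim's factorization asymptotics: Ecker's bound $\prod a_i\le 2n$ gives the injection into factorizations of integers $l\le 2n$ for the upper bound, and padding factorizations of $l\le n$ with ones gives the lower bound. This is precisely the route the paper sketches as a ``rough estimate'' in \eqref{eq: opp} (upper bound only) and then bypasses; completing it two-sidedly as you do is legitimate and more self-contained with respect to the factorization literature, at the cost of needing Ecker's bound, the stability of the asymptotic under $x\mapsto 2x$, and the fact that Oppenheim's result is an asymptotic equivalence rather than a one-sided $O$-estimate. The caveats you flag (discarding the degenerate one-factor factorizations, and confirming two-sidedness of \eqref{eq: opp}) are the right ones and are indeed only lower-order or citation-level issues; the paper's appeal to Weingartner settles both in one step since that reference counts exactly the solutions in question.
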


\begin{proof}
We begin by counting all steps in the algorithm where \( p' \leq s' + n - (i + 1) \) for the extended sequence \( a' = (a_1, \dots, a_i, a_{i+1}) \). When equality holds, we obtain a solution to~\eqref{eq: sumprod}. In the case of strict inequality, Lemma~\ref{lem: nyblom} implies that the sequence \( a' \) forms a solution for some \( i+1 \leq n' < n \). Moreover, by Theorem~\ref{th: eff working}, part~\ref{part: c}, such a solution for any \( n' \) does not repeat. Consequently, according to~\cite{weingartner}, the number of algorithmic steps satisfying  
\( p' \leq s' + n - (i + 1) \) follows the asymptotic behavior described in~\eqref{eq: asy time}.

The algorithm halts recursion at the first index \( i \) where  \( p' > s' + n - (i + 1) \). Since the number of such runs is at most twice the number of non-repeating solutions of length \( n' \leq n \), the overall time complexity likewise satisfies the asymptotic bound in~\eqref{eq: asy time}.
\end{proof}

If \( a = (a_1, \dots, a_n) \) is a solution, then by equation~\eqref{bta} in Section~\ref{sec: intro}, we have \( \prod_{i=1}^n a_i \leq 2n \). This bound does not explicitly appear in our algorithm, which raises the question of whether adding an extra pruning condition—namely, terminating a branch when \( \prod > 2n \)—would improve efficiency. However, this turns out not to be the case: with or without the additional condition, the same number of branches are pruned.

To see this, suppose the recursive function is called with arguments \( (s, p, i) \). Then
\[
p = a_1 a_2 \dots a_i < a_1 + \dots + a_i + (n - i) = s + (n - i).
\]
By~\cite[Lemma 6 (iii)]{weingartner}, if $p'\leq s'+n-(i+1)$, then the new product satisfies \( p'=p a_{i+1} \leq 2n \) as well. (This bound for product-sum inequalities generalizes the result in~\eqref{btb} due to Ecker that holds for product-sum equalities.) In other words, any branch not pruned by the original algorithm would also not be pruned by the additional condition, rendering the modification ineffective.

However, the number of function calls does not directly correspond to computation time. From a practical standpoint, checking the condition \( \prod_{k=1}^i a_k > 2n \) is computationally slightly cheaper than evaluating \( \prod_{k=1}^i a_k > \sum_{k=1}^i a_k + n - i \). Nonetheless, since the simpler condition is violated in only a small fraction of branches, the overall reduction in runtime is minimal. In fact, numerical experiments indicate that for \( n \approx 10^4 \), the additional pruning accounts for roughly 10\% of cases, while for \( n \approx 10^6 \), the impact drops to about 5\% and continues to decrease as \( n \) grows.

Table~\ref{tab: time comp} presents the number of algorithm steps (normalized as \emph{Runs} divided by the leading asymptotics in \eqref{eq: asy time}), the number of solutions for each case, and the algorithm's run time on an Intel i7-1365U processor. Results are shown using 8 and 12 threads to highlight the performance gains from parallel processing, particularly for large values of~$n$.
\begin{table}[h!]
\centering
\begin{tabular}{|c|r|r|r|r|r|}
\hline
\textbf{\( m \)} & \textbf{n} & \textbf{\# Runs / \( A(n) \)} & \textbf{\# Solutions} & \textbf{time - 8} & \textbf{time - 12} \\
\hline
3 & 7,294 & 0.0126 & 14 & 0.006 & -- \\
4 & 31,278 & 0.0099 & 14 & 0.008 & -- \\
5 & 124,158 & 0.0082 & 22 & 0.015 & -- \\
6 & 1,030,986 & 0.0064 & 43 & 0.09 & 0.07 \\
7 & 10,027,888 & 0.0051 & 50 & 1.25 & 0.99 \\
8 & 100,006,637 & 0.0042 & 137 & 37 & 32 \\
9 & 1,000,015,252 & 0.0036 & 150 & 357 & 271 \\
\hline
\end{tabular}
\caption{For values \( n = 10^m \), where \( 2 < m < 9 \), we report the number of algorithm runs (normalized as \emph{Runs} divided by the leading-order asymptotic expression 
\( A(n) = \frac{n e^{2\sqrt{\log(n)}}}{\log^{3/4}(n)} \)), along with the corresponding number of solutions to equation~\eqref{eq: sumprod}. 
The final two columns show the system time (in seconds) required to execute the algorithm on an Intel i7-1365U processor, comparing performance with 8 and 12 threads. 
Note that computing times may vary depending on core utilization and thread scheduling.}
\label{tab: time comp}
\end{table}

\section{Fundamental Inequalities}\label{sec: inequalities}
In this section, we present some fundamental statements concerning integer sequences whose product equals their sum.\footnote{The problem of this paper is old and elementary, hence similar statements may be found in the literature, though not formulated in the same manner.}
\begin{theorem}\label{prop: 1}
Let $n\geq 2$. Assume that the sequence $a=(a_1,a_2,\dots,a_n)$ with \(a_1 \geq a_2 \geq \cdots \geq a_n\) satisfies equation \eqref{eq: sumprod}. Then, for any \(1 \leq k \leq n\), 
\begin{equation}\label{inequality}
\prod_{i=1}^k a_i \leq \sum_{i=1}^k a_i + n - k.
\end{equation}
Let $k_\star$ be the greatest integer $k$ for which $a_k>1$. Then \eqref{inequality} holds as equality for $k\geq k_\star$,
and as strict inequality for $k<k_\star$. 
\end{theorem}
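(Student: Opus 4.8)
The plan is to prove the inequality~\eqref{inequality} first, and then upgrade it to the equality/strict-inequality dichotomy using the value of $k_\star$. For the inequality itself, I would fix $k$ and split the sum $\sum_{i=1}^n a_i$ and product $\prod_{i=1}^n a_i$ into their ``first $k$'' and ``last $n-k$'' parts. Write $P = \prod_{i=1}^k a_i$, $S = \sum_{i=1}^k a_i$, and let $Q = \prod_{i=k+1}^n a_i$, $T = \sum_{i=k+1}^n a_i$ be the contributions from the tail. Equation~\eqref{eq: sumprod} reads $PQ = S + T$. The key elementary fact I would invoke is that for any finite sequence of positive integers, the product is at least the sum minus (length minus one): that is, $Q \geq T - (n-k-1)$, with the trivial convention that an empty tail gives $Q=1$, $T=0$. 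Equivalently $T \leq Q + (n-k-1)$. This is proved by an easy induction on the number of tail entries exceeding $1$ (each such entry, being $\geq 2$, contributes more multiplicatively than additively), and is exactly the phenomenon the paper already alludes to (``products of integers strictly greater than one exceed their sum'').

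Granting $T \leq Q + (n-k-1)$, I substitute into $PQ = S + T$ to get $PQ \leq S + Q + (n-k-1)$, hence $P \leq \dfrac{S + Q + (n-k-1)}{Q} = \dfrac{S + (n-k-1)}{Q} + 1$. Since $Q \geq 1$, this gives $P \leq S + (n-k-1) + 1 = S + n - k$, which is~\eqref{inequality}. Here I should double-check the edge case $k = n$, where the tail is empty: then $Q=1$, $T=0$, and the claim is just $P = S$, i.e.~\eqref{eq: sumprod} itself, consistent with $n-k=0$.

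For the refinement: let $k_\star$ be the largest index with $a_{k_\star} > 1$ (so $a_j = 1$ for $j > k_\star$; if every $a_j=1$ then $n=1$, excluded since $n\geq 2$, or handle it as a degenerate equality case). For $k \geq k_\star$, the tail $(a_{k+1},\dots,a_n)$ consists entirely of ones, so $Q = 1$ and $T = n-k$ exactly; then $PQ = S+T$ becomes $P = S + n - k$, i.e.~equality in~\eqref{inequality}. For $k < k_\star$, the tail contains at least one entry $a_{k_\star} \geq 2$. I want a \emph{strict} version of $T \leq Q + (n-k-1)$ in this case, or more directly: since the tail has a factor $\geq 2$, we have $Q \geq 2$, and I can redo the estimate. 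From $PQ = S + T$ and $T \leq Q + (n-k-1)$ we get $P \leq 1 + \frac{S+(n-k-1)}{Q} \leq 1 + \frac{S+(n-k-1)}{2} < 1 + S + (n-k-1) = S+n-k$, where the last strict inequality uses $S + (n-k-1) \geq 2$ — which holds because $S \geq a_1 \geq a_{k_\star} \geq 2$. So~\eqref{inequality} is strict for $k < k_\star$.

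The main obstacle I anticipate is purely bookkeeping around the boundary cases — the empty tail at $k=n$, the all-ones degenerate sequence, and making sure the strict inequality argument's final step genuinely has slack (the bound $S + (n-k-1) \geq 2$ needs $S \geq 2$, which is where monotonicity $a_1 \geq a_2 \geq \cdots$ and the existence of an entry $>1$ somewhere in a solution of length $\geq 2$ are used). None of these is deep, but they are the steps where a careless write-up would have a gap. The one genuinely substantive ingredient, the lemma $\sum (\text{tail}) \leq \prod(\text{tail}) + (\#\text{tail} - 1)$, is standard and short by induction, so I would state it as a small auxiliary claim and dispatch it quickly.
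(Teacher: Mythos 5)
Your proof is correct, but it takes a genuinely different route from the paper's. The paper proves the inequality by \emph{downward} induction on $k$, starting from the equality at $k=k_\star$ (where the tail is all ones) and repeatedly dividing the relation $\prod_{i=1}^{k}a_i \le \sum_{i=1}^{k}a_i + (n-k)$ by $a_k$ to descend to $k-1$, using the observation that $s_k(a_k-1) > a_k - 1$ to preserve strictness. You instead split the solution into head and tail, $PQ = S+T$, and invoke the standard auxiliary fact that for positive integers the sum is at most the product plus (length minus one), i.e.\ $T \le Q + (n-k-1)$; substituting and dividing by $Q\ge 1$ gives~\eqref{inequality} directly, and $Q\ge 2$ (which holds exactly when $k<k_\star$) gives strictness. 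Your argument is more modular: each $k$ is handled independently rather than by propagation from $k_\star$, the case $n=2$ needs no separate treatment (the paper checks it by hand), and the equality/strictness dichotomy falls out of a single parameter ($Q=1$ versus $Q\ge 2$) rather than from an induction anchored at $k_\star-1$. The cost is that you must state and prove the tail lemma $\sum b_i \le \prod b_i + (m-1)$ as a separate claim --- though its induction is essentially the same one-step estimate $(b-1)(\prod - 1)\ge 0$ that powers the paper's descent. One tiny point: your final strict step only needs $S+(n-k-1)>0$, not $\ge 2$, and this is automatic since $S\ge a_1\ge 1$ and $k\le n-1$; your write-up slightly overstates what is required but loses nothing.
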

\begin{proof}
$n\geq 2$ implies that $k_\star\geq 2$ (that is, any solution has at least two integer factors that are strictly greater than one). The first statement can be easily verified for $n=2$, where the only solution of \eqref{eq: sumprod} is $a=(2,2)$. Therefore, we assume $n\geq 3$ for the rest of the statement.

By Definition of $k_\star$, $a_{k\star+1}=a_{k_\star+2}=\dots=a_n=1$. Therefore, \eqref{inequality} holds for $k\geq k_\star$ with equality. Next we show that for $k<k_\star$, the inequality is strict. For $k=1$ this is obviously true.

We proceed by induction, however by decreasing the index $k$ (and thus, $k=1$ is not the basis): To establish that the statement holds for the inductive basis $k=k_\star-1$, we compute
\begin{equation}\label{eq: enigmatic ineq}
a_{k_\star}\prod_{i=1}^k a_i= \prod_{i=1}^{k_\star} a_i =\sum_{i=1}^{k_\star} a_i + n - k_\star= \sum_{i=1}^k a_i+a_{k_\star}+(n-k)-1
\end{equation}
and thus division by $a_{k_\star}$ we get\footnote{Note that $s_k:=\sum_{i=1}^k a_i>1$ and $a_{k_\star}\geq 2$, hence also $s_k(a_{k_\star}-1)>a_{k_\star}-1$, whence $s_k+a_{k_\star}-1<s_k a_{k_\star}$ and substituting this inequality into \eqref{eq: enigmatic ineq} yields \eqref{eq: ind step}.}
\begin{equation}\label{eq: ind step}
\prod_{i=1}^k a_i <\sum_{i=1}^k a_i+(n-k).
\end{equation}
To make the inductive step, assume \eqref{eq: ind step} holds for some $1<k<k_\star$. Then
$$
a_k \prod_{i=1}^{k-1}a_i<\sum_{i=1}^k a_i+(n-k)=\sum_{i=1}^{k-1}a_i+a_k+(n-k),
$$
and division by $a_k$ gives
$$
\prod_{i=1}^{k-1}a_i<\sum_{i=1}^{k-1}a_i+1+(n-k)=\sum_{i=1}^{k-1}a_i+(n-(k-1)).
$$
\end{proof}

The following two auxilliary statements, also rely on the fundamental inequality of Theorem \ref{prop: 1}. The following statement in particular implies that by modifying the $1$'s in a solution of \eqref{eq: sumprod} no further solutions can be obtained.
\begin{lemma}\label{lem: 2}
Suppose $n>2$ and $k<n$. If $(a_1,\dots,a_k,1,\dots,1)$ satisfies $\prod_{i=1}^k a_i\geq\sum_{i=1}^k a_i+(n-k)$, then the extended sequence 
$$a'=(a_1,\dots,a_k,a_{k+1}',1,1,\dots,1)
$$
with $a_{k+1}'>1$ satisfies $\prod_{i=1}^{k+1} a_i'>\sum_{i=1}^{k+1} a_i'+(n-k-1)$ and thus does not solve \eqref{eq: sumprod}.
\end{lemma}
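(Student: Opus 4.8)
\textbf{Proof plan for Lemma~\ref{lem: 2}.} The plan is to argue directly from the hypothesis inequality by tracking how the product and the sum change when we append a single factor $a'_{k+1} > 1$. Write $p = \prod_{i=1}^k a_i$ and $s = \sum_{i=1}^k a_i$, so the assumption reads $p \geq s + (n-k)$. Appending $a'_{k+1} = b$ with $b \geq 2$ gives new product $pb$ and new sum $s + b$, and the target inequality is $pb > (s+b) + (n-k-1)$.

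First I would rewrite the desired conclusion as $pb - b > s + (n-k) - 1$, i.e. $b(p-1) > s + (n-k) - 1$. Using the hypothesis $p \geq s + (n-k)$, it suffices to show $b(p-1) \geq p - 1 + \text{(something positive)}$, or more simply: since $b \geq 2$ we have $b(p-1) \geq 2(p-1) = (p-1) + (p-1) \geq (p-1) + (s + (n-k) - 1)$, where the last step uses $p - 1 \geq s + (n-k) - 1$ from the hypothesis. This yields $b(p-1) \geq (p-1) + s + (n-k) - 1 \geq s + (n-k) - 1 + (p-1)$, and I then need $p - 1 \geq 1$, i.e. $p \geq 2$, to conclude strict inequality $b(p-1) > s + (n-k) - 1$. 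The bound $p \geq 2$ holds because $p = \prod_{i=1}^k a_i$ and the solution must contain at least one factor exceeding one; indeed if all of $a_1,\dots,a_k$ were $1$ then the hypothesis would read $1 \geq k + (n-k) = n > 2$, a contradiction. So $k \geq 1$ with $a_1 \geq 2$, giving $p \geq 2$.

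Once the strict inequality $pb > (s+b) + (n-k-1)$ is established, the final claim that $a'$ does not solve~\eqref{eq: sumprod} follows immediately: by Theorem~\ref{prop: 1} (specifically the feasibility direction, inequality~\eqref{inequality} applied with the partial index $k+1$), any genuine solution of length $n$ with initial segment $a_1,\dots,a_{k+1}$ must satisfy $\prod_{i=1}^{k+1} a'_i \leq \sum_{i=1}^{k+1} a'_i + n - (k+1)$, which we have just violated. Hence no such solution exists, and in particular the displayed sequence $a'$ is not one.

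I do not anticipate a genuine obstacle here; the lemma is a short arithmetic manipulation. The only point requiring a moment of care is the edge case analysis ensuring $p \geq 2$ (equivalently that the hypothesis cannot be vacuously satisfied by an all-ones prefix), which is where the assumption $n > 2$ gets used, and making sure the chain of inequalities produces a \emph{strict} inequality at the end rather than a non-strict one — this is exactly what the $p \geq 2$ (so $p - 1 \geq 1 > 0$) input secures.
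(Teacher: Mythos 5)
Your argument is correct and follows essentially the same route as the paper: both exploit $a'_{k+1}\ge 2$ to (at least) double the product and then play this off against the hypothesis $\prod_{i=1}^k a_i \ge \sum_{i=1}^k a_i+(n-k)$. The only minor difference is that you secure strictness by extracting $p\ge 2$ from the hypothesis, whereas the paper extracts $k\ge 2$ and also uses the ordering $a'_{k+1}\le a_k$ in its final estimate, so your version has the small advantage of not needing monotonicity of the extension.
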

\begin{proof}
Note that $a_1<a_1+n-1$, hence the assumed inequality $\leq$ implies that $k\geq 2$. Since $a_{k+1}'\geq 2$,
\begin{align*}
\prod_{i=1}^{k+1}a_i'&=a_{k+1}'\prod_{i=1}^k a_i\geq 2 \prod_{i=1}^k a_i\geq 2\sum_{i=1}^k a_i+2(n-k)\\&> \sum_{i=1}^{k} a_i+a_k+2(n-k)\geq\sum_{i=1}^{k+1} a_i'+(n-(k-1))
\end{align*}
where the strict inequality holds because $k\geq 2$, and the last inequality holds, because $k<n$ and $a_{k+1}'\leq a_k$. Thus $a'$ does not satisfy the inequality \eqref{inequality}. Hence $a'$ does not solve \eqref{eq: sumprod}.
\end{proof}

A simpler modification is increasing solutions (in the sense of lexicographic order), without altering the length $k_\star$ of the non-trivial part of the solution; also this modification does not yield any solution:
\begin{lemma}\label{lem: 2x}
Suppose $n>2$ and $k<n$. If $(a_1,\dots,a_k,1,\dots,1)$ satisfies $\prod_{i=1}^k a_i\geq\sum_{i=1}^k a_i+(n-k)$, then the modified sequence 
$$a'=(a_1',\dots,a_k',1,\dots,1)
$$
with $a_{j}'\geq a_j$ for any $1\leq j\leq k$ such that $a'\neq a$, satisfies  $\prod_{i=1}^k a_i'>\sum_{i=1}^k a_i'+(n-k)$. In particular, $a'$ does not solve \eqref{eq: sumprod}.
\end{lemma}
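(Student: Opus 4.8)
The plan is to reduce the claim to an elementary monotonicity property of the function $g(x_1,\dots,x_k):=\prod_{i=1}^k x_i-\sum_{i=1}^k x_i$ on $\mathbb N^k$. Incrementing a single coordinate $x_j$ by $1$ changes $g$ by $\prod_{i\neq j}x_i-1$, which is $\geq 0$ for positive integers and is $>0$ exactly when some coordinate other than $j$ is $\geq 2$. Since $a'$ dominates $a$ coordinatewise on $\{1,\dots,k\}$ and $a'\neq a$, one can pass from $a$ to $a'$ by finitely many unit increments on these coordinates; as $g$ never decreases along the way, it suffices to exhibit a single step at which it strictly increases, and then $g(a')>g(a)\geq n-k$.

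The only structural input needed is that $a$ has at least two coordinates that are $\geq 2$. This follows from the hypothesis in the same spirit as the corresponding step in the proof of Lemma~\ref{lem: 2}: if at most one of $a_1,\dots,a_k$ exceeds $1$, then $\prod_{i=1}^k a_i=\max_i a_i$ and $\sum_{i=1}^k a_i=\max_i a_i+(k-1)$, so $\prod_{i=1}^k a_i\geq\sum_{i=1}^k a_i+(n-k)$ would give $0\geq n-1$, contradicting $n>2$; in particular $k\geq 2$.

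Next I would run the increment argument with the first increment applied to $a$ itself. Whatever coordinate $j\leq k$ is incremented at that first step, one of the two coordinates of $a$ that are $\geq 2$ is different from $j$, so $\prod_{i\neq j}a_i\geq 2$ and $g$ strictly increases there; every later increment leaves $g$ weakly larger. Hence $\prod_{i=1}^k a_i'>\sum_{i=1}^k a_i'+(n-k)$. Finally, because $a'=(a_1',\dots,a_k',1,\dots,1)\in\mathbb N^n$ has $n-k$ trailing ones contributing $n-k$ to the sum and nothing to the product, a solution of~\eqref{eq: sumprod} would require $\prod_{i=1}^k a_i'=\sum_{i=1}^k a_i'+(n-k)$; the strict inequality just established rules this out (equivalently, apply Theorem~\ref{prop: 1} to $a'$).

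I do not anticipate a genuine obstacle. The one subtlety worth flagging is that strictness of the increment of $g$ is only guaranteed when a coordinate other than the one being incremented is $\geq 2$, so the argument should invoke the ``two large coordinates'' fact at the \emph{first} step, where the current sequence is exactly $a$ and the hypothesis applies, rather than attempting to argue strictness at an arbitrary step of the path.
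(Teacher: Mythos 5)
Your proof is correct and follows essentially the same route as the paper's: both decompose the coordinatewise increase from $a$ to $a'$ into unit increments and observe that a single increment changes $\prod_{i=1}^k a_i-\sum_{i=1}^k a_i$ by $\prod_{i\neq j}a_i-1$, with the hypothesis forcing some coordinate other than $j$ to exceed $1$. The only (immaterial) difference is that the paper re-derives strictness at every increment by propagating the inequality, whereas you secure strictness at the first step and rely on weak monotonicity afterward.
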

\begin{proof}
By an inductive argument, it suffices to show that an increase of one $a_j$, $1\leq j<k$ to $b=(a_1,\dots,a_{j-1}, (a_j+1),a_{j+1},\dots, a_k)$ (or, if $j=k$ to $b=(a_1,\dots,a_{k-1}, (a_k+1)$) yields a strict inequality. First, since $\prod_{i=1}^k a_i\geq\sum_{i=1}^k a_i+(n-k)$, at least one $a_i$, $1\leq i\leq k$ ($i\neq j$) must be strictly larger than $1$, and thus
$\frac{1}{a_j}\prod_{i=1}^k a_i=\prod_{i\neq j}a_i>1$. Hence,
$$
\prod_{i=1}^k b_i=\prod_{i=1}^k a_i+\frac{1}{a_j}\prod_{i=1}^k a_i>\sum_{i=1}^k a_i+(n-k)+1=\sum_{i=1}^k b_i+(n-k).
$$
Thus $a'$ does not satisfy the inequality \eqref{inequality}. Therefore, $a'$ cannot be a solution of \eqref{eq: sumprod}.
\end{proof}

The third, important fact limits the range for the solutions' summands $a_i$,  (cf. the \cite[Boundedness Proposition 1 and 2]{ecker}). Here is an independent proof that uses Theorem \ref{prop: 1}.
\begin{lemma}\label{lem: 1}
Any solution $(a_1,\dots,a_n)$ of \eqref{eq: sumprod} satisfies $a_i\leq n$. Any solution, except the basic one ($a_1=n, a_2=2$) satisfies $a_i<n$ (and hence $a_1<n$).
\end{lemma}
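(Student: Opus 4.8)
The plan is to derive both bounds directly from the fundamental inequality \eqref{inequality} of Theorem~\ref{prop: 1}, applied with the index $k=1$. Since the sequence is non-increasing, $a_1$ is the largest summand, so it suffices to bound $a_1$; the bound $a_i\le a_1$ then handles every other coordinate automatically.

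First I would invoke Theorem~\ref{prop: 1} with $k=1$: for any solution $(a_1,\dots,a_n)$ we have
\[
a_1 = \prod_{i=1}^1 a_i \leq \sum_{i=1}^1 a_i + n - 1 = a_1 + n - 1,
\]
which is vacuous, so $k=1$ alone is not enough. Instead I would use the sharpness part of Theorem~\ref{prop: 1}: if $k_\star\ge 2$ is the largest index with $a_{k_\star}>1$, then \eqref{inequality} holds with \emph{strict} inequality for every $k<k_\star$. In particular, taking $k=1<k_\star$ (valid since $k_\star\ge 2$) gives $a_1 < a_1 + n - 1$, still vacuous — so the real work is to combine the $k=1$ and $k=2$ instances. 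Applying \eqref{inequality} at $k=2$ yields $a_1 a_2 \le a_1 + a_2 + n - 2$, i.e. $a_1(a_2-1) \le a_2 + n - 2$, hence $a_1 \le (a_2+n-2)/(a_2-1) = 1 + (n-1)/(a_2-1)$. If $a_2\ge 2$ this gives $a_1 \le 1 + (n-1) = n$, proving the first claim. For equality $a_1=n$ we would need $a_2=2$ and equality in \eqref{inequality} at $k=2$, which forces $k_\star=2$ and hence $a=(n,2,1,\dots,1)$, the basic solution; this is exactly the sharpness dichotomy of Theorem~\ref{prop: 1}. The remaining case is $a_2=1$, which forces $a_1=1$ too (since $a_1\ge a_2$ but then $k_\star<2$, contradicting $k_\star\ge 2$ unless $n=2$), so for $n\ge 3$ this case does not arise for a genuine solution with $a_1\ge 2$.

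So the argument structure I would write is: (i) recall $k_\star\ge 2$ from Theorem~\ref{prop: 1}; (ii) apply \eqref{inequality} at $k=2$ and rearrange to $a_1 \le 1 + (n-1)/(a_2-1) \le n$, using $a_2\ge 2$; (iii) for the strict bound when $a$ is not the basic solution, note that if $a_2\ge 3$ then $a_1 \le 1 + (n-1)/2 < n$ for $n\ge 3$, while if $a_2=2$ then by the sharpness clause of Theorem~\ref{prop: 1} the inequality at $k=2$ is strict precisely when $k_\star>2$, giving $a_1 a_2 < a_1 + a_2 + n - 2$ and hence $a_1 < n$; and if $k_\star=2$ with $a_2=2$ we are in the basic solution. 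Finally $a_i\le a_1$ (resp. $a_i\le a_1<n$) by monotonicity closes the proof.

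The main obstacle is the careful case analysis around $a_2=2$: one must cleanly separate the basic solution $(n,2,1,\dots,1)$ — where $a_1=n$ genuinely occurs — from all other solutions, and the cleanest way to do this is to lean entirely on the equality-versus-strict-inequality dichotomy already established in Theorem~\ref{prop: 1} rather than re-deriving it. A minor point to handle is the degenerate reading where some "solution" has $a_1=1$; but for $n\ge 3$ a solution has $k_\star\ge 2$, so $a_1\ge a_2\ge 2$ is automatic and no separate treatment is needed.
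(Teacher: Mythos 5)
Your proposal is correct and follows essentially the same route as the paper: both obtain $a_1\le n$ from Theorem~\ref{prop: 1} applied at $k=2$, and both isolate the basic solution by using the equality-versus-strict-inequality dichotomy of that theorem. The only difference is organizational: the paper proves the first bound by contradiction (writing $a_1=n+r$) and invokes Lemma~\ref{lem: 2} for the sub-case $a_1=n$, $a_2=2$, $k_\star\ge 3$, whereas you handle that sub-case directly with the strict form of \eqref{inequality}, which is a slight streamlining rather than a different method.
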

\begin{proof}
Since $a_i$ is a non-increasing sequence, it suffices to show that $a_1\leq n$. Suppose, for a contradiction, that $a_1>n$, say $a_1=n+r$ for some $r\geq 1$. By Theorem \ref{prop: 1}, we have for $k=2$ the inequality
\[
(n+r)a_2\leq n+r+a_2+(n-2)
\]
implying
\[
0\leq n(a_2-2)\leq -(a_2-1)(r-1)-1<0,
\]
an impossibility, whence indeed $a_i\leq n$.

Clearly, the basic solution satisfies $a_1=n$. Assume now $a_1=n$ and $k_\star=2$, then $na_2=n+a_2+n-2$, whence $a_2=2$, which means, $a$ is the basic solution. 

Assume now $k_\star\geq 3$, and for some $i\geq 1$, $a_i=n$. Then $a_1=n$. If $a_2=2$, then the solution $a$ would be obtained from the basic solution by appending an integer $a_3\geq 2$ (and $k_\star-3$ further integers greater than one), which is in contradiction with Lemma \ref{lem: 2}.  Thus $a_2>2$ and by Theorem \ref{prop: 1}, we get the strict inequality for $k=2$, that is,
\[
n a_2<n+a_2+n-2
\]
which implies $n(a_2-2)<a_2-2$, hence $n<1$. Thus $a_1<n$, as claimed.
\end{proof}

The final statement -- that is however not used in verifing the validity of the algorithm -- demonstrates that solutions of \eqref{eq: sumprod} have very few coefficients strictly larger than one. 
\begin{lemma}\label{lem: log length}
Let $n\geq 3$ and let $a$ satisfy the inequality
\begin{equation}\label{ineq: sumprod}
\prod_{i=1}^n a_i\leq\sum_{i=1}^n a_i.
\end{equation}
For $2\leq m\leq n$, let $k_{\star,m}$ be the greatest integer $k$ for which $a_k=m$. Then
\begin{equation}\label{eq: log}
k_{\star,m}< \log_m(n)+1.
\end{equation}
\end{lemma}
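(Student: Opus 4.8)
The plan is to squeeze the hypothesis $\prod_{i=1}^n a_i\le\sum_{i=1}^n a_i$ between a lower bound on the product that isolates a power of $m$ and an upper bound on the sum that is a multiple of $a_1$. Write $k:=k_{\star,m}$. If no coordinate of $a$ equals $m$, set $k:=0$ and the claim $0<\log_m(n)+1$ is immediate since $n\ge m\ge 2$; so assume $k\ge 1$. Monotonicity then gives $a_1\ge a_2\ge\cdots\ge a_{k-1}\ge a_k=m\ge 2$.

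First I would bound the product from below. Since $a_{k+1},\dots,a_n$ are positive integers, discarding them cannot increase the product, so
\[
\prod_{i=1}^n a_i \;\ge\; a_1\prod_{i=2}^k a_i \;\ge\; a_1\,m^{k-1},
\]
the last inequality using $a_2,\dots,a_{k-1}\ge m$ together with $a_k=m$ (and the empty-product convention when $k=1$).

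Next I would bound the sum from above, and this is the only delicate point. Because $a$ is non-increasing, $\sum_{i=1}^n a_i\le n a_1$, with equality exactly when $a_1=\cdots=a_n$. I claim that equality is impossible under the hypothesis. Indeed, if all $a_i$ equalled the common value $c=a_1\ge m\ge 2$, then \eqref{ineq: sumprod} would read $c^{\,n}\le nc$, i.e.\ $c^{\,n-1}\le n$, forcing $2^{\,n-1}\le n$; but $2^{\,n-1}>n$ for all $n\ge 3$ (easy induction), a contradiction. (Equivalently, one may first note that \eqref{ineq: sumprod} forces $a_n=1$ when $n\ge 3$, since otherwise all $a_i\ge 2$ would give $2^{\,n-1}a_1\le\prod_i a_i\le\sum_i a_i\le n a_1$.) Hence $\sum_{i=1}^n a_i< n a_1$. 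Combining the two bounds with the hypothesis yields $a_1 m^{k-1}\le\prod_{i=1}^n a_i\le\sum_{i=1}^n a_i< n a_1$; dividing by $a_1>0$ gives $m^{k-1}<n$, and applying $\log_m$ gives $k-1<\log_m(n)$, i.e.\ $k_{\star,m}<\log_m(n)+1$, which is \eqref{eq: log}.

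The main obstacle is obtaining the \emph{strict} inequality in \eqref{eq: log} rather than merely $k_{\star,m}\le\log_m(n)+1$: the natural chain of estimates is non-strict, and strictness has to be recovered by excluding the degenerate configuration in which all entries are equal — which is precisely where the assumption $n\ge 3$ enters. The remaining steps are routine.
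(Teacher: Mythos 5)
Your proposal is correct and follows essentially the same strategy as the paper: both arguments sandwich the hypothesis \eqref{ineq: sumprod} between the product lower bound $a_1 m^{k-1}$ (the paper uses $a_1 a_k^{k-1}$ with $a_k\geq m$) and a sum upper bound that is a multiple of $a_1$, then cancel $a_1$ to obtain $m^{k-1}<n$. The only substantive difference is how strictness is secured — the paper bounds the sum by $a_1 k + m(n-k)$ and gets strictness from the tail entries being $<m$, which requires a preliminary claim that $k_{\star,2}<n$, whereas you use the cruder bound $n a_1$ and exclude the all-equal configuration via $2^{n-1}>n$; both devices work, and yours slightly streamlines the argument by making that preliminary claim unnecessary.
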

\begin{remark}
This statement pertains to the efficiency of our algorithm. For \( m = 2 \), \( k_{\star,2} \) denotes the exact number of coefficients \( a_i \) in solutions \( a = (a_1, a_2, \dots, a_n) \) of~\eqref{eq: sumprod} that are strictly greater than 1. Since the algorithm terminates as soon as the product exceeds the sum, it never explores sequences containing more than \( \lceil k_{\star,2} \rceil + 1 \) such elements.
\end{remark}
\begin{proof}[Proof of Lemma \ref{lem: log length}.]
First, we claim that for $n\geq 3$, $k_{\star,2}<n$ (and thus $k_{\star,m}<n$ for any $m>1$). In fact, this can be directly checked to hold for $3\leq n\leq 4$. Furthermore, for $n\geq 5$, we know by induction that $2^n>n^2$. Assume, for a contradiction, $k_{\star,2}=n$. Then $2^n\leq \prod_{i=1}^n a_i\leq\sum_{i=1}^na_i\leq n^2$, which is impossible. 

Thus we have to prove that if the product of $n \geq 3$ natural numbers $a_1 \geq a_2 \geq \cdots \geq a_n \geq 1$ satisfies \eqref{ineq: sumprod} and precisely $1\leq k<n$ of them are greater than or equal to $m$:
    \begin{gather}
        a_1 \geq a_2 \geq \cdots \geq a_k \geq m \label{geq_m} \\
        m > a_{k+1} \geq a_{k+2} \geq \cdots \geq a_n \label{less_than_m}
    \end{gather}
    Then:
    \[ k \leq \ceil {\log_m n}. \]

Proof: Since $a_1 \geq a_2 \geq \cdots \geq a_k$, we have:
    \begin{equation}\label{ineq: sumprodxx} 
        a_1 \ a_k^{k-1} \leq \prod_{i=1}^k a_i \leq \prod_{i=1}^n a_i\leq  \sum_{i=1}^n a_i
    \end{equation}
    From \eqref{geq_m} and \eqref{less_than_m}, we have:
    \begin{equation}\label{not_strict}
        \sum_{i=1}^n a_i < a_1 \ k +  m \ (n-k).
    \end{equation}
    Substituting this into \eqref{ineq: sumprodxx} we get:
    \[
        a_1 \ a_k^{k-1} < a_1 \ k + m \ (n-k).
    \]
    Dividing both sides by $a_1$, we get:
    \[
        a_k^{k-1} < k + \frac{m}{a_1} \ (n - k)\leq n,
    \]
the last inequality holding due to $a_1 \geq m$. Thus,
    \[
        k-1 < \log_{a_k} n
    \]
    From \eqref{geq_m}, we know that $a_k \geq m$, thus    $k < \log_m n + 1$.
\end{proof}


%
%
%
%
%
%
%
%
%
%

\vskip 20pt\noindent {\bf Acknowledgement.} 
We thank Prof. Bernd Kreussler (Mary Immaculate College, Limerick) for valuable feedback on this work.

\end{document}